\newcommand{\Z}{\mathbb{Z}}
\newcommand{\C}{\mathbb C}
\newcommand{\g}{\lambda}
\newcommand{\ze}{\bold 0}
\newtheorem{theorem}{Theorem}
\newtheorem{corollary}{Corollary}
\newtheorem{lemma}{Lemma}
\begin{document}
\title
{Some resolvent set  properties of band operators with matrix
elements.}
\author{ Andrey Osipov \thanks{Supported by
RFBR: project 14-01-00349 }}
\date{}
\maketitle

\begin{abstract}
For operators generated by a certain class of infinite band
matrices with matrix elements we establish a characterization of
the resolvent set in terms of polynomial solutions of the
underlying higher order finite difference equations. This enables
us to describe some asymptotic behaviour of the corresponding
systems of vector orthogonal polynomials on the resolvent set.
\end{abstract}
\textit{Keywords and phrases.}  Band matrices, Difference
operators, Weyl matrix,  Orthogonal polynomials.

\section{Introduction}

In recent years some results concerning the structure of the
resolvent set of nonsymmetric difference operators generated by
infinite band matrices were obtained ~\cite{akj}- ~\cite{ b2}.
Originally the second order difference operators generated by
three-diagonal matrices were studied ~\cite{akj} - ~\cite{b},
later on some results on the resolvent set properties of higher
order band operators were obtained ~\cite{cal} - ~\cite{ b2}.
 In studying the spectral properties of such band operators the
important role is played by the Weyl matrix ~\cite{ b2} (or the
Weyl function in the second order case ~\cite{akj}- ~\cite{ cb})
of the corresponding operator, the systems of polynomials
orthogonal with respect to the Weyl functions as well as the
convergence of the Hermite-Pade approximations of the Weyl matrix.
Here we obtain some results similar to ~\cite{ b2} in the case of
higher order band operators with matrix elements. Also note that
for the second order band operators with matrix elements a similar
results on the structure of their resolvent sets were obtained in
~\cite{os1}.


 Consider an infinite nonsymmetric band
matrix $A=(A_{k,l})_{k,l=0}^\infty$ whose entries are matrices of
order $N$ with complex elements: $A_{k,l} \in \mathbb{C}^{N\times
N}$, satisfying for all $k$
 and for all $\ell < k-s$ or $\ell
> k+r$
\begin{eqnarray}
\label{a} A_{k,\ell}=O, \quad A_{k,k+r}, \, A_{k+s,k} \quad\text{are
invertible},
\end{eqnarray}
where $O$ is a zero matrix of order $N$.
 That is, $A$ takes the form
\begin{eqnarray*}
A=
\left(
\begin{matrix}
A_{0,0}&\dots&A_{0,r}&O&O&\dots&\dots&\dots\\
    A_{1,0}&A_{1,1}&\dots&A_{1,r+1}&O&\dots&\dots&\dots\\
   \vdots&\vdots&\vdots&\vdots&\ddots&\ddots&\\
A_{s,0}&A_{s,1}&A_{s,2}&\dots&\dots&A_{s,s+r}&O&\dots\\
 O&A_{s+1,1}&A_{s+1,2}&A_{s+1,3}&\dots&\dots&A_{s+1,s+r+1}&\dots\\
O&O&\ddots&\vdots&\vdots&\vdots&\vdots&\ddots
\end{matrix}
\right)
\end{eqnarray*}
with $s+r+1$ nontrivial diagonals, where $r $ and $s$ are some
fixed natural numbers.
To the matrix $A$ we assign finite-difference equations in the
matrices $Y_k$, $Y_k^+ \in \mathbb{C}^{N\times N}$
\begin{eqnarray}
&&\label{d1} A_{k,k-s}Y_{k-s}+A_{k,k-s+1}Y_{k-s+1}+
\dots+A_{k,k+r}Y_{k+r}=\g Y_k,
\\&&\label{d2} Y_{k-r}^{+} A_{k-r,k}+Y_{k-r+1}^{+}A_{k-r+1,k}+\dots +
Y_{k+s}^{+}A_{k+s,k}=\g Y_{k}^+,
\end{eqnarray}
where $k\ge 0$, $\g\in\C$ is some parameter, and we define
$A_{k,\ell}$ with negative indices as
\begin{eqnarray*}
&A_{k,k-s}=-E,\quad A_{k-s+j,k}= O,\quad 0\le k<s, \quad 1\le j
<s-k;\\
&A_{k-r,k}=-E,\quad A_{k-r+j,k}=O,\quad 0\le k < r,\; 1\le j <r-k;
\end{eqnarray*}

where $E$ is a unit matrix.

We consider some particular fundamental systems of solutions
$\{P(\g),Q(\g)\}$ of (\ref{d1}), and $\{ P^{+}(\g),Q^{+}(\g)\}$ of
(\ref{d2}), respectively, with elements being polynomials with
respect to $\g$ with matrix coefficients: Denote by $$
\begin{gathered}
 Q(\g)=(Q_k(\g))_{k=-s}^{\infty}=
 (Q_k^{1}(\g),\dots,Q_k^{r}(\g))_{k=-s}^{\infty};\\
 P(\g)=(P_k(\g))_{k=-s}^{\infty}=
 (P_k^{1}(\g),\dots,P_k^{s}(\g))_{k=-s}^{\infty};\\
\end{gathered}
$$ solutions of (\ref{d1}) satisfying the initial conditions
\begin{equation} \label{init_L}
    Q_{0:r-1} = I_r , \, Q_{-s:-1} = \ze_{s\times r} , \,
    P_{-s:-1} = I_s , \,
    P_{0:r-1} = \ze_{r\times s} .
\end{equation}
Furthermore, denote by $$
\begin{gathered}
Q^+(\g)= \left(
\begin{array}{c}
Q_k^{1,+}(\g)\\ \vdots\\ Q_k^{s,+}(\g)
\end{array}
\right)_{k=-r}^\infty , \quad P^+(\g)= \left(
\begin{array}{c}
P_k^{1,+}(\g)\\ \vdots\\ P_k^{r,+}(\g)
\end{array}
\right)_{k=-r}^\infty
\end{gathered}
$$ solutions of the dual recurrence relation (\ref{d2}) satisfying
the initial conditions \begin{equation} \label{init_R}
\begin{gathered}
    Q_{0:s-1}^+ = I_s , \, Q_{-r:-1}^+ = \ze_{s\times r} , \,
    P_{-r:-1}^+ = I_r , \,
    P_{0:s-1}^+ = \ze_{r\times s} .
\end{gathered}
\end{equation}
Here and in what follows we use the following notations: $Q_{j:k}$
 (and $Q_{j:k}^+$, respectively)
for the stacked matrix with rows $Q_\ell$, $\ell=j,j+1,...,k$ (with
columns $Q_\ell^+$, $\ell=j,j+1,...,k$), etc; $I_i, \ze_{i\times j}$
for the identity and zero matrices of sizes $i$ and $i\times j$
(with the elements from $\mathbb{C}^{N\times N}$, $\, I_1=E$). We
shall also use $A_{j:k,m:n}$ for the submatrix of $A$ composed of
its rows labeled $j$ to $k$, and its columns labeled $m$ to $n$.
Finally, we use block matrix notations like $[M \; N]$.

\begin{lemma}
 \label{lem1}
 The expression
 $$
     F(k) \equiv \sum_{i=0}^{s-1}\sum_{j=i+1}^s Y_{k+i}^{+}
    A_{k+i,k+i-j}Y_{k+i-j}-\sum_{i=0}^{r-1}\sum_{j=i+1}^r
     Y_{k+i-j}^{+}A_{k+i-j,k+i}Y_{k+i}, \quad k\ge 0
  $$
  does not depend on $k$
\end{lemma}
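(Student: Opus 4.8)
The plan is to read $F(k)$ as the discrete bilinear concomitant (a matrix analogue of a constant Wronskian) attached to the mutually dual recurrences (\ref{d1}) and (\ref{d2}), and to prove its $k$-independence by the usual Lagrange/Green mechanism: I will show that the first difference $F(k+1)-F(k)$ collapses to the single ``bulk'' expression at the index $k$, which vanishes once the two recurrences are inserted. Write the left-hand sides of (\ref{d1}) and (\ref{d2}) as $(LY)_k:=\sum_{j=-s}^{r}A_{k,k+j}Y_{k+j}$ and $(L^{+}Y^{+})_k:=\sum_{j=-r}^{s}Y_{k+j}^{+}A_{k+j,k}$, and put
\[
 B_k:=Y_k^{+}(LY)_k-(L^{+}Y^{+})_k\,Y_k .
\]
Since $Y$ solves (\ref{d1}) and $Y^{+}$ solves (\ref{d2}) for one and the same parameter $\g$, we have $(LY)_k=\g Y_k$ and $(L^{+}Y^{+})_k=\g Y_k^{+}$, so that $B_k=\g\,Y_k^{+}Y_k-\g\,Y_k^{+}Y_k=O$ for every $k\ge 0$. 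Thus the whole lemma reduces to the purely algebraic identity $F(k+1)-F(k)=-B_k$, which uses only the band pattern (\ref{a}) and the conventions for negative indices, not the recurrences themselves.

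To obtain that identity I would first pass to ``matrix-entry'' coordinates $(a,b)$, setting $a=k+i,\ b=k+i-j$ in the first double sum of $F(k)$ and $a=k+i-j,\ b=k+i$ in the second. The first sum becomes $\sum_{a=k}^{k+s-1}\sum_{b=a-s}^{k-1}Y_a^{+}A_{a,b}Y_b$, i.e. the sum of $Y_a^{+}A_{a,b}Y_b$ over the lower-left corner block ($b<k\le a$), while the second becomes $\sum_{b=k}^{k+r-1}\sum_{a=b-r}^{k-1}Y_a^{+}A_{a,b}Y_b$, the same quantity over the upper-right corner block ($a<k\le b$). Hence $F(k)$ is exactly the difference ``(in-band products with $b<k\le a$) $-$ (in-band products with $a<k\le b$)'' straddling the cut between $k-1$ and $k$. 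Moving the cut from $k$ to $k+1$ merely interchanges the row and the column through the index $k$, so, using (\ref{a}) to discard everything outside the band, $F(k+1)-F(k)$ telescopes to the four one-index sums
\[
 -\sum_{b=k-s}^{k-1}Y_k^{+}A_{k,b}Y_b-\sum_{b=k+1}^{k+r}Y_k^{+}A_{k,b}Y_b
 +\sum_{a=k-r}^{k-1}Y_a^{+}A_{a,k}Y_k+\sum_{a=k+1}^{k+s}Y_a^{+}A_{a,k}Y_k .
\]
Adjoining the missing diagonal term $Y_k^{+}A_{k,k}Y_k$ to each group (it cancels) turns the first pair of sums into $-Y_k^{+}(LY)_k$ and the second pair into $(L^{+}Y^{+})_k Y_k$; their sum is precisely $-B_k=O$, which gives $F(k+1)=F(k)$ and hence $F(k)\equiv F(0)$.

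The step I expect to be the main obstacle is exactly this bookkeeping: one must verify that the shift $k\mapsto k+1$ leaves behind only the row and column through $k$, that each product present in $F(k)$ but absent from $F(k+1)$ (and conversely) is recovered with the correct sign, and that the two corner blocks are captured by precisely the stated index ranges with the two distinct bandwidths $s$ and $r$. A secondary delicate point is the behaviour at small $k$, where $k-s$ or $k-r$ is negative: there the identity has to be read with the stipulated conventions $A_{k,k-s}=-E$, $A_{k-r,k}=-E$ and the vanishing of the intermediate negative-index blocks, and one should check that these conventions are compatible with the band pattern used in the reindexing, so that $F(k+1)-F(k)=-B_k$ persists down to the boundary $k=0$.
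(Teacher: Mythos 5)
Your proof is correct and follows essentially the same route as the paper's: both establish $F(k+1)=F(k)$ by observing that the discrepancy between consecutive values is exactly accounted for by the two recurrences (\ref{d1})--(\ref{d2}) applied at the single index $k$, where the matching of $\g Y_k^+Y_k$ against $\g Y_k^+Y_k$ produces the cancellation. The difference is purely organizational — the paper substitutes the recurrences into the $i=0$ terms of $F(k)$ and regroups until $F(k+1)$ emerges, whereas you first isolate the formal identity $F(k+1)-F(k)=-B_k$ (which, as you note, needs no recurrence at all) and then invoke the recurrences once; your bookkeeping via corner-block reindexing is tighter, and your closing remark about negative indices is in fact automatic, since the four boundary sums you obtain are precisely the left-hand sides of (\ref{d1})--(\ref{d2}) with the paper's negative-index conventions built in.
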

\begin{proof}
We have
\begin{eqnarray*}
F(k)&=&Y_k^+ A_{k,k-1}Y_{k-1}+Y_k^+ A_{k,k-2}Y_{k-2}+\dots +
Y_k^{+}A_{k,k-s}Y_{k-s}+\\
&+&Y_{k+1}^+ A_{k+1,k-1}Y_{k-1}+Y_{k+1}^+ A_{k+1,k-2}Y_{k-2}+\dots +
Y_{k+1}^{+}A_{k+1,k+1-s}Y_{k+1-s}+\\
+ \dots  &+& Y_{k+s-1}^+ A_{k+s-1,k-1}Y_{k-1}-Y_{k-1}^+
A_{k-1,k}Y_{k}-Y_{k-2}^+ A_{k-2,k}Y_{k}-\dots -
Y_{k-r}^{+}A_{k-r,k}Y_{k}-\\
&-&Y_{k-1}^+ A_{k-1,k+1}Y_{k+1}-Y_{k-2}^+ A_{k-2,k+1}Y_{k+1}-\dots -
Y_{k-r+1}^{+}A_{k-r+1,k}Y_{k+1}-\\
&-& \dots  - Y_{k-1}^+ A_{k-1,k+r-1}Y_{k+r-1}.
\end{eqnarray*}
  Applying (\ref{d1})- (\ref{d2}) to $F(k)$ for $i=0$, we
  get
 \begin{eqnarray*}
F(k)&=&Y_k^+(\g-A_{k,k})Y_{k}-Y_k^+ A_{k,k+1}Y_{k+1}-\dots
-Y_k^+ A_{k,k+r}Y_{k+r}+\\
&+&Y_{k+1}^+ A_{k+1,k-1}Y_{k-1}+Y_{k+1}^+ A_{k+1,k-2}Y_{k-2}+\dots +
Y_{k+1}^{+}A_{k+1,k+1-s}Y_{k+1-s}+\\
+ \dots  &+& Y_{k+s-1}^+ A_{k+s-1,k-1}Y_{k-1}+Y_k^+(A_{k,k}-\g)Y_k +
Y_{k+1}^+ A_{k+1,k}Y_{k}+\dots +Y_{k+s}^+ A_{k+s,k}Y_{k}-\\
&-&Y_{k-1}^+ A_{k-1,k+1}Y_{k+1}-Y_{k-2}^+ A_{k-2,k+1}Y_{k+1}-\dots -
Y_{k-r+1}^{+}A_{k-r+1,k}Y_{k+1}-\\
&-& \dots  - Y_{k-1}^+ A_{k-1,k+r-1}Y_{k+r-1}.
 \end{eqnarray*}
 By separating the ``positive" and ``negative" parts of $F(k)$, we
 obtain
 \begin{eqnarray*}
F(k)&=&- Y_k^+ A_{k,k+2}Y_{k+2}-\dots
-Y_k^+ A_{k,k+r}Y_{k+r}+\\
&+&Y_{k+1}^+ A_{k+1,k-1}Y_{k-1}+Y_{k+1}^+ A_{k+1,k-2}Y_{k-2}+\dots +
Y_{k+1}^{+}A_{k+1,k+1-s}Y_{k+1-s}+\\
+ \dots  &+& Y_{k+s-1}^+ A_{k+s-1,k-1}Y_{k-1}+ \\
&+& Y_{k+2}^+ A_{k+2,k}Y_{k}+\dots +Y_{k+s}^+ A_{k+s,k}Y_{k}- \\
&-& Y_k^+ A_{k,k+1}Y_{k+1} - Y_{k-1}^+ A_{k-1,k+1}Y_{k+1}- \dots
-Y_{k-r+1}^{+}A_{k-r+1,k}Y_{k+1}- \\
&-& \dots  - Y_{k-1}^+ A_{k-1,k+r-1}Y_{k+r-1}= \dots \\
&=&Y_{k+1}^+ A_{k+1,k}Y_{k}+Y_{k+1}^+ A_{k+1,k-1}Y_{k-1}+\dots +
Y_{k+1}^{+}A_{k+1,k+1-s}Y_{k+1-s}+\\
&+&Y_{k+2}^+ A_{k+2,k}Y_{k}+Y_{k+2}^+ A_{k+2,k-1}Y_{k-1}+\dots +
Y_{k+2}^{+}A_{k+2,k+2-s}Y_{k+2-s}+\\
+ \dots  &+& Y_{k+s}^+ A_{k+s,k}Y_{k}-Y_{k}^+
A_{k,k+1}Y_{k+1}-Y_{k-1}^+ A_{k-1,k+1}Y_{k+1}-\dots -
Y_{k-r+1}^{+}A_{k-r+1,k+1}Y_{k+1}-\\
&-&Y_{k}^+ A_{k,k+2}Y_{k+2}-Y_{k-1}^+ A_{k-1,k+2}Y_{k+2}-\dots -
Y_{k-r+2}^{+}A_{k-r+2,k+1}Y_{k+2}-\\
&-& \dots  - Y_{k}^+ A_{k,k+r}Y_{k+r}=F(k+1).
\end{eqnarray*}
\end{proof}
\begin{lemma}
 \label{lem2}
For all $k \geq 0$
  \begin{eqnarray}
  \label{mi}
   && I_{r+s} = \left[\begin{array}{c}
   P^+_{k-r:k+s-1}  \\ Q^+_{k-r:k+s-1}
   \end{array}\right]\times
   \\&&
  \times
   \left[\begin{array}{cc}
      \bf{0}_{r \times s} &
      - A_{k-r:k-1,k:k+r-1} \\
      A_{k:k+s-1,k-s:k-1} & \bf{0}_{s \times r}
   \end{array}\right]
   \times
   \left[\begin{array}{cc}
   Q_{k-s:k+r-1} , - P_{k-s:k+r-1}
   \end{array}\right]
\end{eqnarray}
\end{lemma}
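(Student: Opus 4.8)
The plan is to multiply out the triple product on the right of (\ref{mi}) into a $2\times 2$ block array and to recognize each block as the conserved quantity $F(k)$ of Lemma~\ref{lem1}, evaluated on one of the four pairs of fundamental solutions. Abbreviate the windowed factors by $\mathcal{P}^{+}=P^{+}_{k-r:k+s-1}$ and $\mathcal{Q}^{+}=Q^{+}_{k-r:k+s-1}$ (the two block rows of the first factor), and $\mathcal{Q}=Q_{k-s:k+r-1}$, $\mathcal{P}=P_{k-s:k+r-1}$ (the two block columns of the third factor), and let $M$ denote the middle matrix. The product is then the $2\times2$ block matrix with top row $(\mathcal{P}^{+}M\mathcal{Q},\,-\mathcal{P}^{+}M\mathcal{P})$ and bottom row $(\mathcal{Q}^{+}M\mathcal{Q},\,-\mathcal{Q}^{+}M\mathcal{P})$. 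Since the target $I_{r+s}$ has diagonal blocks $I_r,I_s$ and vanishing off-diagonal blocks, the four claims to prove are $\mathcal{P}^{+}M\mathcal{Q}=I_r$, $\mathcal{P}^{+}M\mathcal{P}=\ze$, $\mathcal{Q}^{+}M\mathcal{Q}=\ze$ and $\mathcal{Q}^{+}M\mathcal{P}=-I_s$.

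The heart of the argument is to identify each bilinear form $\mathcal{Y}^{+}M\mathcal{Y}$ (with $(Y^{+},Y)$ one of the four pairs) with $F(k)$. The rows of $M$ are indexed by $\ell\in\{k-r,\dots,k+s-1\}$ and its columns by $m\in\{k-s,\dots,k+r-1\}$, and its only nonzero blocks are $-A_{\ell,m}$ for $\ell<k\le m$ and $+A_{\ell,m}$ for $m<k\le\ell$. Hence $\mathcal{Y}^{+}M\mathcal{Y}=\sum_{\ell=k}^{k+s-1}\sum_{m=k-s}^{k-1}Y^{+}_{\ell}A_{\ell,m}Y_{m}-\sum_{\ell=k-r}^{k-1}\sum_{m=k}^{k+r-1}Y^{+}_{\ell}A_{\ell,m}Y_{m}$. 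Reindexing the two double sums in the definition of $F$ by $\ell=k+i,\ m=\ell-j$ in the first and by $m=k+i,\ \ell=m-j$ in the second, and using that $A_{\ell,m}=O$ outside the band to enlarge the triangular summation regions to full rectangles, identifies this expression precisely with $F(k)$ formed from the pair $(Y^{+},Y)$. Thus the four blocks are $F(k)$ built from $(P^{+},Q)$, $(P^{+},P)$, $(Q^{+},Q)$ and $(Q^{+},P)$, respectively.

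It then remains to evaluate these four instances, and here Lemma~\ref{lem1} does the essential work: each is independent of $k$, so I may set $k=0$, where the normalizations (\ref{init_L})--(\ref{init_R}) apply. At $k=0$ the first double sum of $F$ involves $Y$ at the indices $-s,\dots,-1$ and $Y^{+}$ at $0,\dots,s-1$, while the second involves $Y^{+}$ at $-r,\dots,-1$ and $Y$ at $0,\dots,r-1$; these are exactly the ranges fixed by the initial data. For the pairs $(P^{+},P)$ and $(Q^{+},Q)$ both double sums vanish, each carrying a factor from a zero initial block ($P^{+}_{0:s-1}=\ze$ or $P_{0:r-1}=\ze$, respectively $Q_{-s:-1}=\ze$ or $Q^{+}_{-r:-1}=\ze$), which yields the two off-diagonal zeros. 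For $(P^{+},Q)$ the first sum dies since $Q_{-s:-1}=\ze$, and in the second only the term $j=r$ survives because the boundary convention forces $A_{i-r,i}=-E$ with all intermediate negative-index entries zero; the surviving sum $-\sum_{i=0}^{r-1}P^{+}_{i-r}Q_i$ collapses through $[P^{+}_{-r}\,\cdots\,P^{+}_{-1}]=I_r$ and $Q_{0:r-1}=I_r$ to $-I_r$, so that $F=I_r$. The pair $(Q^{+},P)$ is symmetric: the second sum dies, while the first collapses via $A_{i,i-s}=-E$, $Q^{+}_{0:s-1}=I_s$ and $P_{-s:-1}=I_s$ to $-I_s$, so $F=-I_s$, matching the sign required above.

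The main obstacle I expect is the combinatorial bookkeeping of the identification step: matching the summation ranges and the signs of the two double sums defining $F(k)$ to the two nonzero triangular blocks of $M$, and checking that the band vanishing of $A$ legitimately extends those triangles to the rectangular blocks appearing in $M$. Once this identification is secured, the reduction through Lemma~\ref{lem1} and the evaluation at $k=0$ are routine, the only remaining care being the correct reading of the negative-index conventions for $A$ and of the initial conditions (\ref{init_L})--(\ref{init_R}).
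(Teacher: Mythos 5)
Your proof is correct and follows essentially the same route as the paper: each block of the triple product is identified (via the band condition (\ref{a}), which lets the rectangular sums coincide with the triangular ones) with the conserved quantity $F(k)$ of Lemma~\ref{lem1}, formed from one of the four pairs of fundamental solutions, and the identity is then reduced to $k=0$ and read off from the initial conditions (\ref{init_L})--(\ref{init_R}) and the negative-index conventions. You merely spell out the $k=0$ evaluation in more detail than the paper, which simply asserts it.
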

\begin{proof}
  For $k=0$, the claim follows  from the choice
  (\ref{init_L}) and (\ref{init_R}) of the initial
  conditions and the definition of $A_{k,\ell}$ with negative indices.
  Now consider the particular solutions
  $Y_n=Q_{n}^{m}$ and
  $Y_n^+=P_{n}^{\ell,+}$ for some indices $\ell,m=1,\dots,r$.
  We find, by using  (\ref{a}) that
  \begin{eqnarray*}
G^{\ell, m}(k) \equiv \sum_{i=0}^{s-1}\sum_{j=1}^s
P_{k+i}^{\ell,+}A_{k+i,k+i-j}Q_{k+i-j}^{m}-
\sum_{i=0}^{r-1}\sum_{j=1}^r
P_{k+i-j}^{\ell,+}A_{k+i-j,k+i}Q_{k+i}^{m}=\\
= \sum_{i=0}^{s-1}\sum_{j=i+1}^s
P_{k+i}^{\ell,+}A_{k+i,k+i-j}Q_{k+i-j}^{m}-
\sum_{i=0}^{r-1}\sum_{j=i+1}^r
P_{k+i-j}^{\ell,+}A_{k+i-j,k+i}Q_{k+i}^{m}
\end{eqnarray*}
According to Lemma~1, for $k>0$, $G^{\ell, m}(k)=G^{\ell,
m}(0)=\delta _{\ell, m}$
 In other words, we have shown
that the corresponding entries in the first $r$ rows and columns
of the claimed matrix identity coincide. The identities for the
other three blocks : $r\times s, \; s\times r$ and $s \times s$
are obtained in a similar way by choosing $Y_n\in
\{P_{n}^{m},Q_{n}^{m}\}$ and $Y_n^+\in
\{P_{n}^{\ell,+},Q_{n}^{\ell,+}\}$.
\end{proof}

\section{Band operators and their resolvent set properties}

The above matrix $A$ generates a linear operator in the space
$l^2_N$ of sequences $u=(u_0,u_1,\dots)\; $ where the vector column
$u_j \in \mathbb{C}^N, $ with inner product $ (u,v)=
\sum_{j=0}^{\infty} v_j^*u_j \,$. For this operator we shall use the
same notation.

Let $I$ be the identity operator in $\,l^2_N \,$.Then it admits the
matrix representation
\begin{eqnarray*}
I= \left(
\begin{matrix}
E&O&O&\dots\\
    O&E&O&\dots\\
 O&O&E&\dots\\
   \vdots&\vdots&\vdots&\ddots\\
\end{matrix}
\right)
\end{eqnarray*}

Let $\;\mathfrak M= (\mathfrak M_{i,j})_{i=1,\dots,r}^{j=1,\dots,s},
\; \mathfrak M_{i,j} \in \mathbb C^{N \times N} \;$ be an arbitrary
matrix of the size $\,r \times s \,$ with matrix elements. Then for
$\,k,n \in \mathbb{Z_+}\,$ we define

\begin{eqnarray}
 \label{r1}
 R_{k,n}=\begin{cases}
  Q_k(\g)R_n^+(\g), \quad
  0\le k < n+r, \\
  R_k(\g)Q_n^+(\g), \quad 0 \le n < k+s
 \end{cases}
 \end{eqnarray}

 where $R_k(\g)=Q_k(\g)\mathfrak M -P_k(\g)$ and
 $R_n^+(\g)= \mathfrak M Q_n^+(\g)-P_n^+(\g)$.

 As we see, we have two different definitions of $R_{k,n}$ for $\,
 n-s< k <n+r \,$. In fact, they give the same value.

 \begin{lemma}
 \label{lem3}
For $ n-s <k <n+r $   $ \, Q_k(\g) R_n^+(\g)=R_k(\g)Q_n^+(\g).$
 \end{lemma}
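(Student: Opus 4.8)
The plan is to reduce the stated equality to a single ``Wronskian-type'' identity and then read it off from Lemma~\ref{lem2}. Writing $R_k = Q_k \mathfrak M - P_k$ and $R_n^+ = \mathfrak M Q_n^+ - P_n^+$ and forming the difference, the common term $Q_k \mathfrak M Q_n^+$ cancels, so that
\[
  Q_k(\g)R_n^+(\g) - R_k(\g)Q_n^+(\g) = P_k(\g)Q_n^+(\g) - Q_k(\g)P_n^+(\g).
\]
Hence it suffices to establish $Q_k(\g) P_n^+(\g) = P_k(\g) Q_n^+(\g)$ for all $k,n\ge0$ with $n-s<k<n+r$.

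Next I would apply Lemma~\ref{lem2} with its center index renamed to $c\ge0$, reading identity~(\ref{mi}) as a factorization $L\,M\,R = I_{r+s}$ in which $L$ is the tall factor built from $P^+_{c-r:c+s-1}$ and $Q^+_{c-r:c+s-1}$, $M$ is the block anti-diagonal middle factor, and $R = [\,Q_{c-s:c+r-1},\,-P_{c-s:c+r-1}\,]$. For each fixed $\g$ all three are genuine square numerical matrices of order $(r+s)N$, so the factorization forces each to be invertible; then from $L(MR)=I_{r+s}$ we get $(MR)L=I_{r+s}$, i.e. $M(RL)=I_{r+s}$, whence $RL = M^{-1}$. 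The off-diagonal corner blocks of $M$ are $A_{c-r:c-1,\,c:c+r-1}$ and $A_{c:c+s-1,\,c-s:c-1}$, which by the band hypothesis~(\ref{a}) are block-triangular with invertible diagonal blocks $A_{\ell,\ell+r}$, $A_{\ell+s,\ell}$, hence invertible; consequently $M^{-1}$ is again block anti-diagonal, i.e.\ its $s\times r$ top-left and $r\times s$ bottom-right blocks vanish. Note that $M$ is independent of $\g$, so these vanishing assertions hold for every $\g\in\C$ and therefore as polynomial identities.

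On the other hand the $(\ell,m)$ block of $RL = Q_{c-s:c+r-1}P^+_{c-r:c+s-1} - P_{c-s:c+r-1}Q^+_{c-r:c+s-1}$ is exactly $Q_\ell P_m^+ - P_\ell Q_m^+$. Matching this against the two vanishing blocks of $M^{-1}$ gives $Q_\ell P_m^+ = P_\ell Q_m^+$ whenever $\ell\in\{c,\dots,c+r-1\},\ m\in\{c,\dots,c+s-1\}$ (bottom-right block) or $\ell\in\{c-s,\dots,c-1\},\ m\in\{c-r,\dots,c-1\}$ (top-left block). To finish, given $(k,n)$ with $n-s<k<n+r$ I take $c=\min(k,n)\ge0$; a one-line check of the inequalities shows that $(\ell,m)=(k,n)$ then always lands in the bottom-right window, which yields $Q_k P_n^+ = P_k Q_n^+$ and hence the lemma. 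The only genuinely substantive points are the passage $LMR=I\Rightarrow RL=M^{-1}$ (legitimate because for each fixed $\g$ everything is a finite numerical matrix) and the verification that $c=\min(k,n)$ always places $(k,n)$ inside a vanishing block; the main obstacle I expect is organizational, namely keeping the asymmetric $r$-versus-$s$ block partitions and the index windows $c-s:c+r-1$ against $c-r:c+s-1$ consistent throughout.
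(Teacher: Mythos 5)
Your proof is correct, and it rests on the same two pillars as the paper's own argument: the reduction of the claim to the Wronskian-type identity $Q_k(\g)P_n^+(\g)=P_k(\g)Q_n^+(\g)$ (this is exactly the paper's identity (\ref{rel})), and Lemma~\ref{lem2}. The execution, however, is genuinely different and more complete. The paper argues componentwise and writes out only the case $s=r=1$: it lists the four identities (\ref{qq}), sets up a $2\times 2$ linear system in unknowns $\Delta_k^1,\Delta_k^2$, solves it by combining those identities, substitutes back to obtain $Q_nP_n^+=P_nQ_n^+$, and then settles general $r,s$ with ``the proof is similar.'' You instead read (\ref{mi}) globally as a factorization $LMR=I_{r+s}$ of square numerical matrices (for each fixed $\g$), invoke the fact that a one-sided inverse of a square complex matrix is two-sided to get $RL=M^{-1}$, and harvest the identities from the two zero blocks of $M^{-1}$; the explicit choice $c=\min(k,n)$ then verifies that the hypothesis $n-s<k<n+r$ places the pair $(k,n)$ inside the vanishing bottom-right window. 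This buys the lemma for arbitrary $r,s$ in one stroke and makes visible exactly where the index constraint enters --- both points the paper leaves implicit; note also that the same ``one-sided inverse'' fact is what silently guarantees solvability of the paper's linear system, so your version makes the paper's hidden step explicit rather than adding a new one. One economy you could make: invertibility of $M$, and hence of its two anti-diagonal corner blocks, already follows from $LMR=I_{r+s}$, so the separate appeal to the band condition (\ref{a}) and triangularity --- which, for windows touching negative indices, also requires the paper's convention that $A_{k,\ell}$ there equals $-E$ or $O$ --- is not actually needed.
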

\begin{proof}
As follows from the definition of $R_k(\g), \,$ $R_n^+ (\g), \,$ it
suffices to show that
\begin{equation}
\label{rel}
 Q_k(\g) P_n^+(\g)=P_k(\g)Q_n^+(\g),\quad n-s <k <n+r .
\end{equation}
Consider the case $\,s=r=1\,$. By Lemma~2 we have
\begin{eqnarray}
\label{qq}
\begin{cases}
P_k^+(\g)A_{k,k-1}Q_{k-1}(\g)-P_{k-1}^+(\g)A_{k-1,k}Q_{k}(\g)=E \\
P_k^+(\g)A_{k,k-1}P_{k-1}(\g)-P_{k-1}^+(\g)A_{k-1,k}P_{k}(\g)=O \\
Q_k^+(\g)A_{k,k-1}Q_{k-1}(\g)-Q_{k-1}^+(\g)A_{k-1,k}Q_{k}(\g)=O \\
-Q_k^+(\g)A_{k,k-1}P_{k-1}(\g)+Q_{k-1}^+(\g)A_{k-1,k}P_{k}(\g)=E.
\end{cases}
\end{eqnarray}
Now for $ k \ge 0 $ consider the system
\begin{eqnarray*}
\begin{cases}
Q_{k-1}(\g)\Delta_k^1 + P_{k-1}(\g)\Delta_k^2=O\\
A_{k-1,k}Q_k(\g)\Delta_k^1+A_{k-1,k}P_k(\g)\Delta_k^2=E,
\end{cases}
\end{eqnarray*}
where $\Delta_k^1, \, \Delta_k^2 \,$ are unknown. Multiplying the
first equation of the system on the left by $P_k^+(\g)A_{k,k-1} $
and the second equation by $-P_{k-1}^+(\g)\,$ and summing the
resulting equations, we obtain
\begin{eqnarray*}
(P_k^+(\g)A_{k,k-1}Q_{k-1}(\g)-P_{k-1}^+(\g)A_{k-1,k}Q_{k}(\g))\Delta_k^{1}+\\
+(P_k^+(\g)A_{k,k-1}P_{k-1}(\g)-P_{k-1}^+(\g)A_{k-1,k}P_{k}(\g))\Delta_k^{2}=
-P_{k-1}^+(\g).
\end{eqnarray*}
Applying (\ref{qq}) we find $\,\Delta_k^{1}= - P_{k-1}^+(\g).$

 Similarly, multiplying the first equation of the system on the
left by $\,-Q_k^+(\g)A_{k,k-1}\,$ and the second equation by
$\,Q_{k-1}(\g)$ we find $\,\Delta_k^{2}= Q_{k-1}^+(\g).$

Thus $\, Q_n(\g) P_n^+(\g)=P_n(\g)Q_n^+(\g)\,n \ge 0\,$ and
therefore $\, Q_n(\g) R_n^+(\g)=R_n(\g)Q_n^+(\g).$

For an arbitrary $s\,$ and $r\,$ the proof is similar as above (and
based on Lemma~2).

\end{proof}
Now consider the infinite matrix $R=(R_{k,n})_{k,n=0}^\infty$
\begin{lemma}
\label{lem4}
 The following matrix identities (formal products
between infinite matrices)
\begin{equation} \label{mid}
   (\g I-A)R=I , \quad R(\g I-A)=I
\end{equation}
are hold.
\end{lemma}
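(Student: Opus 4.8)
The plan is to establish both identities in (\ref{mid}) blockwise. Fix a column index $m$. Because $A$ is banded, the $(k,m)$ block of $(\g I-A)R$ is a finite sum,
\[
  [(\g I-A)R]_{k,m}=\g R_{k,m}-\sum_{\ell}A_{k,\ell}R_{\ell,m},
\]
the inner sum running over $\max(0,k-s)\le\ell\le k+r$. I would use the branch rule of (\ref{r1}): for fixed $m$ one has $R_{\ell,m}=Q_\ell R_m^+$ whenever $\ell<m+r$ and $R_{\ell,m}=R_\ell Q_m^+$ whenever $\ell>m-s$, the two representations agreeing on the overlap by Lemma~\ref{lem3}. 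The structural input is that $Q_\ell$, $P_\ell$, and hence $R_\ell=Q_\ell\mathfrak M-P_\ell$, all solve (\ref{d1}) in the index $\ell$, while the dual solutions solve (\ref{d2}).

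First I would dispose of the off-diagonal blocks. If $k<m$, then every index in the stencil satisfies $\ell\le k+r\le m+r-1$, so $R_{\ell,m}=Q_\ell R_m^+$ throughout and
\[
  [(\g I-A)R]_{k,m}=\Big(\g Q_k-\sum_{\ell}A_{k,\ell}Q_\ell\Big)R_m^+=O
\]
by (\ref{d1}). If $k>m$, then $\ell\ge k-s\ge m-s+1$, so $R_{\ell,m}=R_\ell Q_m^+$ throughout, and the same computation with $R_\ell$ in place of $Q_\ell$ again yields $O$. Both cases agree with $\delta_{k,m}E$.

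The diagonal $k=m$ is the heart of the matter. Here the stencil runs $\ell=m-s,\dots,m+r$, and the two branches are forced to differ only at the top index $\ell=m+r$, where the definition prescribes $R_{m+r,m}=R_{m+r}Q_m^+$ while the remaining terms are represented as $Q_\ell R_m^+$. Substituting $R_m^+=\mathfrak M Q_m^+-P_m^+$ and $R_{m+r}=Q_{m+r}\mathfrak M-P_{m+r}$, the coefficient of $\mathfrak M$ cancels by (\ref{d1}) for $Q$, and after using (\ref{d1}) once more to absorb the bulk the whole expression collapses to the single term
\[
  [(\g I-A)R]_{m,m}=A_{m,m+r}\big(P_{m+r}Q_m^+-Q_{m+r}P_m^+\big).
\]
Expanding $A_{m,m+r}P_{m+r}$ and $A_{m,m+r}Q_{m+r}$ through (\ref{d1}) and cancelling all interior contributions by the relation (\ref{rel}) of Lemma~\ref{lem3}, this reduces to the boundary concomitant $-A_{m,m-s}(P_{m-s}Q_m^+-Q_{m-s}P_m^+)$, which the argument of Lemma~\ref{lem1} (equivalently, the normalization packaged in Lemma~\ref{lem2}) shows to be independent of $m$; evaluating at $m=0$ with the conventions for negative indices and the initial data (\ref{init_L})--(\ref{init_R}) gives the value $E$, so $[(\g I-A)R]_{m,m}=\delta_{m,m}E$.

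The hard part will be exactly this diagonal normalization, together with the boundary bookkeeping for $k<s$. There the operator sum omits the negative-column terms that the solution recurrence (\ref{d1}) retains, and one must verify that the omission is harmless. It is: the conventions $A_{k,k-s}=-E$ couple precisely to the initial values, and a short computation shows that the stray contribution reduces to a product of the form $P_{k-s}Q_m^+$, which by (\ref{init_L})--(\ref{init_R}) is an entry of an identity block and hence equals $\delta_{k,m}E$. Finally, the second identity $R(\g I-A)=I$ would be obtained by the mirror-image argument: $A$ now stands on the right, the off-diagonal blocks vanish by the dual recurrence (\ref{d2}), and the diagonal leftover is the dual bilinear concomitant, normalized once more to $E$ by (\ref{init_R}). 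I would carry out the $(\g I-A)R=I$ computation in full and indicate only the transpose-symmetric changes for $R(\g I-A)=I$.
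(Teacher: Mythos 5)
Your skeleton is the same as the paper's: represent each column of $R$ by the two branches of (\ref{r1}), use that $Q_\ell$ and $R_\ell$ solve (\ref{d1}) to annihilate everything except a seam term, invoke Lemma~\ref{lem3} on the overlap, and normalize the seam term to $E$; your off-diagonal computation and the reduction of the diagonal block to $A_{m,m+r}\bigl(P_{m+r}Q_m^+-Q_{m+r}P_m^+\bigr)$ are correct (the paper's seam sits at $\ell=n-s$ rather than $\ell=m+r$, which is immaterial). The genuine gap is exactly at the point you call the heart of the matter. Your primary justification --- that $-A_{m,m-s}\bigl(P_{m-s}Q_m^+-Q_{m-s}P_m^+\bigr)$ is independent of $m$ ``by the argument of Lemma~\ref{lem1}'' --- is invalid. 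Lemma~\ref{lem1} telescopes sums of the form $Y^+_{a}A_{a,b}Y_{b}$, with the \emph{dual} solution on the left and the \emph{primal} solution on the right; this ordering is essential, because (\ref{d1}) attaches $A$ to the left of $Y$ while (\ref{d2}) attaches $A$ to the right of $Y^+$, so only products in that order can be contracted by the recurrences. Your seam term has the factors in the opposite order ($Q_{m-s}P_m^+$, primal on the left), the entries are matrices and do not commute, and neither recurrence can be applied to such a product. In fact the unweighted quantity $Q_{m-s}P_m^+-P_{m-s}Q_m^+$ equals $A_{m,m-s}^{-1}$, so it is \emph{not} constant in $m$; constancy of the $A$-weighted version is precisely the assertion to be proved, not something a conserved-quantity argument hands you.

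Note also that your preliminary reduction is circular in this respect: expanding $A_{m,m+r}P_{m+r}$ and $A_{m,m+r}Q_{m+r}$ by (\ref{d1}) and cancelling the interior by (\ref{rel}) converts the top seam into the bottom seam, and the identical manipulation converts it back, so (\ref{d1}) together with Lemma~\ref{lem3} can never pin down the value. The genuinely new input must be Lemma~\ref{lem2}, and this is where the paper spends the entire second half of its proof: it extracts the reverse-order identity $A_{n,n-s}\bigl(Q_{n-s}P_n^+-P_{n-s}Q_n^+\bigr)=E$ from the first row of (\ref{mi}) combined with (\ref{rel}), via the explicit multi-equation computation displayed for $s=1$, $r=2$. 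Your parenthetical ``equivalently, the normalization packaged in Lemma~\ref{lem2}'' points in the right direction but is not an argument: (\ref{mi}) delivers identities with $P^+,Q^+$ standing to the \emph{left} of the $A$-blocks and $Q,P$ to the \emph{right}, and passing to your reverse-order product needs an extra step. A clean way to supply it: (\ref{mi}) is a product of three square matrices (with blocks in $\C^{N\times N}$) equal to $I_{r+s}$, so the factors may be cyclically permuted, whence
\begin{equation*}
\left[\begin{array}{cc} Q_{k-s:k+r-1} & -P_{k-s:k+r-1}\end{array}\right]
\left[\begin{array}{c} P^{+}_{k-r:k+s-1} \\ Q^{+}_{k-r:k+s-1}\end{array}\right]
=\left[\begin{array}{cc} {\bf 0}_{r\times s} & -A_{k-r:k-1,k:k+r-1} \\ A_{k:k+s-1,k-s:k-1} & {\bf 0}_{s\times r}\end{array}\right]^{-1};
\end{equation*}
reading off the entry of both sides in the row labelled $k-s$ and the column labelled $k$, and using that $A_{k:k+s-1,k-s:k-1}$ is block upper triangular with $(1,1)$ entry $A_{k,k-s}$, gives $Q_{k-s}P_k^+-P_{k-s}Q_k^+=A_{k,k-s}^{-1}$, which is the required normalization. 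With that step inserted (or with the paper's computation), your proof goes through.
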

\begin{proof}
By symmetry (replace $A$ by its transposed), it is sufficient to
show the first identity of (\ref{mid}). Since $(Q_n(\g))_{n \geq
0}$ is a solution of (\ref{d1}), we have
\begin{equation} \label{first_p}
(\g I-A)
      \left[\begin{matrix} R_{0:k-1,n} \\ {\bf 0}_{\infty \times 1} \end{matrix}\right]
    = \left[\begin{array}{c} {\bf 0}_{(k-r) \times 1} \\ A_{k-r:k-1,k:k+r-1}
R_{k:k+r-1,n}
    \\ - A_{k:k+s-1,k-s:k-1} R_{k-s:k-1,n} \\ {\bf 0}_{\infty \times 1}
   \end{array}\right] , \quad r \leq k \leq n .
\end{equation}
Similarly, $(R_n(\g))_{n \geq 0}$ is a solution of (\ref{d1}), and
hence we have the (formal) identity
\begin{equation} \label{second_p}
(\g I-A)
      \left[\begin{matrix} {\bf 0}_{n \times 1} \\ R_{n:\infty,n} \end{matrix}\right]
    = \left[\begin{array}{c} {\bf 0}_{(n-r) \times 1} \\ - A_{n-r:n-1,n:n+r-1}
R_{n:n+r-1,n}
    \\ A_{n:n+s-1,n-s:n-1} R_{n-s:n-1}(\g) Q^+_{n}(\g) \\ {\bf 0}_{\infty \times 1}
   \end{array}\right] .
\end{equation}
Combining identity (\ref{first_p}) for $k=n$ with (\ref{second_p})
leads to the (formal) identity
\begin{equation*}
  (\g I-A) R_{0:\infty,n}
  = \left[\begin{array}{c} {\bf 0}_{n \times 1}
    \\ A_{n:n+s-1,n-s:n-1} [R_{n-s:n-1}(\g) Q^+_{n}(\g) -
    R_{n-s:n-1,n}]
    \\ {\bf 0}_{\infty \times 1} \end{array}\right]
\end{equation*}
Using the definition of $R_{k,n}$ and (\ref{rel}) we have
\begin{equation*}
A_{n:n+s-1,n-s:n-1} [R_{n-s:n-1}(\g) Q^+_{n}(\g) -
    R_{n-s:n-1,n}]=\left[\begin{array}{c}
     A_{n,n-s}(Q_{n-s}(\g)P_n^{+}(\g)-P_{n-s}(\g)Q_n^{+}(\g))
  \\ {\bf 0}_{(s-1) \times 1}
   \end{array}\right]
\end{equation*}
It remains to show that
$$
 A_{n,n-s}(-P_{n-s}(\g)Q_n^{+}(\g)+Q_{n-s}(\g)P_n^{+}(\g))=E.
$$
In doing so, we use the first row of the matrix identity
(\ref{mi}) and (\ref{rel}).

For example, for $s=1$ and $ r=2$ the first row of (\ref{mi})
gives
\begin{eqnarray}
\label{case12} \nonumber
P_n^{1,+}A_{n,n-1}Q_{n-1}^1-(P_{n-2}^{1,+}A_{n-2,n}-P_{n-1}^{1,+}A_{n-1,n})Q_n^1
-P_{n-1}^{1,+}A_{n-1,n+1}Q_{n+1}^1=E\\
P_n^{1,+}A_{n,n-1}Q_{n-1}^2-(P_{n-2}^{1,+}A_{n-2,n}-P_{n-1}^{1,+}A_{n-1,n})Q_n^2
-P_{n-1}^{1,+}A_{n-1,n+1}Q_{n+1}^2=O\\
\nonumber
P_n^{1,+}A_{n,n-1}P_{n-1}-(P_{n-2}^{1,+}A_{n-2,n}-P_{n-1}^{1,+}A_{n-1,n})P_n
-P_{n-1}^{1,+}A_{n-1,n+1}P_{n+1}=O.
\end{eqnarray}
At the same time, from (\ref{rel}) it follows that
\begin{eqnarray}
\label{rel12} \nonumber Q_n^1P_n^{+,1}&+&Q_n^{2}P_n^{+,2} \; =P_{n}Q_n^+\\
Q_{n+1}^1P_n^{+,1}&+&Q_{n+1}^{2}P_n^{+,2}=P_{n+1}Q_n^+
\end{eqnarray}
Multiplying the first equation of (\ref{case12}) on the right by
$P_n^{1,+}$, the second equation by $P_n^{2,+}$ and the third,
respectively, by $ -Q_n^+ $ and summing the resulting equations we
obtain, using  (\ref{rel12}) that
\begin{equation*}
P_n^{1,+}A_{n,n-1}(-P_{n-1}Q_n^+ +Q_{n-1}^1 P_n^{1,+} +Q_{n-1}^2
P_n^{2,+})= P_n^{1,+}
\end{equation*}
Thus
\begin{equation*}
A_{n,n-1}(-P_{n-1}Q_n^+ +Q_{n-1}^1 P_n^{1,+} +Q_{n-1}^2
P_n^{2,+})= A_{n,n-1}(-P_{n-1}Q_n^+ + Q_{n-1}P_n^{+})=E,
\end{equation*}
and therefore
\begin{equation*}
  (\g I-A) R_{0:\infty,n}
  = \left[\begin{array}{c} {\bf 0}_{n \times 1}
    \\ E
    \\ {\bf 0}_{\infty \times 1} \end{array}\right].
\end{equation*}
This shows claim~(\ref{mid}). Notice that in the above reasoning
we require that $n \geq r$. A proof for the case $0 \leq n < r$ is
similar, we omit the technical details.
\end{proof}

Now consider the case
\begin{equation}
\label{bound} \sup_{i,j \ge 0} || A_{i,j} || \le C < \infty,
\end{equation}
where $ ||\, . \,|| $ is a certain matrix norm.
 Then the operator $A$ is bounded. Recall that $\g$ is an
element of the resolvent set $\Omega(A)$ if there exists an
operator ${\cal R}(\g)=(\g I-A)^{-1}\in L(l^2_N)$  referred to as
the resolvent of $A$ such that $(\g I-A){\cal R}(\g)u=u$ and
 ${\cal R}(\g)(\g I-A)v=v$ for any $ u $ and $ v\in l^2_N$.
As the operator $A$, the resolvent $ {\cal R}(\g) $ can be
expressed as an infinite matrix with matrix elements of order $N :
{\cal R}(\g)=(\tilde R_{i,j})_{i,j=0}^\infty, \, \tilde R_{i,j}
\in \mathbb{C}^{N \times N} $.

The matrix
\begin{equation*}
{\cal M}(\g, A) \equiv (\tilde
R_{i,j})_{i=0,\dots,r-1}^{j=0,\dots,s-1}
\end{equation*}
is called the Weyl matrix of the operator $A$. Note that the
properties of the Weyl matrix in a more general case of $A$ with
operator elements were studied in \cite{os2}, where it was shown
that $Q(\g)$ and $Q^{+}(\g)$ are the systems of polynomials,
orthogonal with respect to $  {\cal M}(\g, A)$.

 For the bounded
 operators $A$ we may establish the following criterion
for $\Omega(A)$.

\begin{theorem}\label{thm1} Suppose that the band operator $A$ with
  matrix representation (\ref{a}) satisfies (\ref{bound}).
  Then $\g\in \mathbb C$ belongs to the resolvent set of $A$
if and only if there exist positive constants $C,\; q<1$ and a
matrix $\mathfrak M= (\mathfrak
M_{i,j})_{i=1,\dots,r}^{j=1,\dots,s},\, \mathfrak M_{i,j}\in
\mathbb C^{N \times N}$ such that
\begin{equation}
\label{claim} ||R_{k,n}|| \le C q^{|n-k|}, \quad k,n \in \Z_{+},
\end{equation}
where $R_{k,n}$ are defined by (\ref{r1}). In this case, the
matrix $\mathfrak M=\mathfrak M(\g) =
(R_{i,j})_{i=0,\dots,r-1}^{j=0,\dots,s-1} $ is unique, and
coincides with the Weyl matrix ${\cal M}(\g, A)$.
\end{theorem}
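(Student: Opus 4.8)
The plan is to prove the two implications separately and then read off the uniqueness and the identification with the Weyl matrix directly from the resolvent. I will start with the easy direction $(\Leftarrow)$. Assume a matrix $\mathfrak M$ and constants $C,\,q<1$ satisfying (\ref{claim}) are given. The bound $\|R_{k,n}\|\le C q^{|k-n|}$ makes all row and column sums $\sum_{n}\|R_{k,n}\|$ and $\sum_{k}\|R_{k,n}\|$ uniformly bounded by $C(1+q)/(1-q)$, so by the Schur test the formal matrix $R=(R_{k,n})$ defines a bounded operator on $l^2_N$. Since $R$ then has absolutely summable rows and columns and $A$ is banded by (\ref{a}), the formal matrix products of Lemma~\ref{lem4} coincide with genuine operator products; hence $(\g I-A)R=R(\g I-A)=I$ as operator identities, $\g I-A$ is boundedly invertible, and $\g\in\Omega(A)$ with $\mathcal R(\g)=R$.

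For the converse $(\Rightarrow)$ assume $\g\in\Omega(A)$. The first step is a Combes--Thomas estimate for the genuine resolvent entries $\tilde R_{k,n}$. Conjugating by the diagonal operator $D_t=\mathrm{diag}(e^{tk}I_N)_{k\ge 0}$, the $(k,\ell)$ block of $D_tAD_t^{-1}$ is $e^{t(k-\ell)}A_{k,\ell}$, which by the finite bandwidth in (\ref{a}) is nonzero only for $|k-\ell|\le\max(r,s)$; hence $\|D_tAD_t^{-1}-A\|\to 0$ as $t\to 0$, and $D_t(\g I-A)D_t^{-1}=\g I-D_tAD_t^{-1}$ stays invertible with uniformly bounded inverse for $|t|\le t_0$. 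Reading off the $(k,n)$ block of $D_t\mathcal R(\g)D_t^{-1}=e^{t(k-n)}\tilde R_{k,n}$ and choosing $t=t_0$ for $k\ge n$ and $t=-t_0$ for $k\le n$ yields $\|\tilde R_{k,n}\|\le 2\|\mathcal R(\g)\|\,e^{-t_0|k-n|}$, a bound of the form (\ref{claim}) with $q=e^{-t_0}$. It remains to recognize these entries as the $R_{k,n}$ of (\ref{r1}) for a suitable $\mathfrak M$.

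This identification is the core step, and I expect it to be the main obstacle. The relation $(\g I-A)\mathcal R=I$ forces each column $(\tilde R_{k,n})_k$ to satisfy the homogeneous recurrence (\ref{d1}) for $k\neq n$, while $\mathcal R(\g I-A)=I$ forces each row $(\tilde R_{k,n})_n$ to satisfy the dual recurrence (\ref{d2}) for $n\neq k$. Together with the exponential decay from the previous step this pins down the column, for $k\ge n$, as a right multiple $R_k(\g)X_n$ of the $s$-block matrix $R_k(\g)$ of solutions decaying at $+\infty$, and symmetrically the row as a left multiple built from the dual decaying solutions. Matching against the polynomial fundamental systems $\{P,Q\}$, $\{P^+,Q^+\}$ and their initial data (\ref{init_L})--(\ref{init_R}) then identifies the decaying solution with $R_k(\g)=Q_k(\g)\mathfrak M-P_k(\g)$ for precisely the choice $\mathfrak M=\mathcal M(\g,A)$, and forces $X_n=Q_n^+(\g)$, giving $\tilde R_{k,n}=R_k(\g)Q_n^+(\g)$ for $k\ge n$ and $\tilde R_{k,n}=Q_k(\g)R_n^+(\g)$ for $k\le n$; Lemma~\ref{lem3} guarantees these two expressions agree on the overlap $n-s<k<n+r$. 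The real work here is establishing existence and uniqueness of the Weyl matrix as the data making $Q_k\mathfrak M-P_k$ decay and expressing the Green blocks through it, which is exactly where the Weyl-matrix and orthogonality theory of \cite{os2} is used. Once $\tilde R_{k,n}=R_{k,n}$ is established, the decay bound from the Combes--Thomas step transfers verbatim to $R_{k,n}$, proving (\ref{claim}).

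Finally I treat uniqueness and the Weyl-matrix statement, which hold whenever $\g\in\Omega(A)$. In that case $R=\mathcal R(\g)$, so $\tilde R_{i,j}=R_{i,j}$. Evaluating (\ref{r1}) at small indices, the initial conditions (\ref{init_L})--(\ref{init_R}) give $Q_i=(\delta_{i,m-1}E)_{m=1}^{r}$, $P_j^+=O$ and $Q_j^+=(\delta_{j,l-1}E)_{l=1}^{s}$ for $0\le i<r$, $0\le j<s$, whence $R_{i,j}=Q_i(\mathfrak M Q_j^+-P_j^+)=\mathfrak M_{i+1,j+1}$. Thus the block $(R_{i,j})_{i=0,\dots,r-1}^{j=0,\dots,s-1}$ recovers $\mathfrak M$ and coincides with the Weyl matrix $\mathcal M(\g,A)=(\tilde R_{i,j})_{i=0,\dots,r-1}^{j=0,\dots,s-1}$. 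Uniqueness is then immediate: any two matrices satisfying (\ref{claim}) yield, by the first paragraph, bounded inverses of $\g I-A$, which must both equal $\mathcal R(\g)$ and hence agree entrywise, so the extraction formula $R_{i,j}=\mathfrak M_{i+1,j+1}$ forces the two matrices $\mathfrak M$ to be equal.
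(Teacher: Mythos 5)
Your sufficiency direction, your uniqueness argument, and the extraction of the corner block $(R_{i,j})_{i=0,\dots,r-1}^{j=0,\dots,s-1}=\mathfrak M$ via the initial conditions (\ref{init_L})--(\ref{init_R}) are all correct, and in substance they are the paper's own arguments (the Schur test merely makes explicit the paper's ``define $(A-\g I)^{-1}$ on finite vectors, bound it, extend'' step). Your Combes--Thomas estimate is also a legitimate, self-contained substitute for the paper's citation of Demko--Moss--Smith \cite{demo} for the decay of the resolvent entries, provided you justify the identity $[(\g I - D_tAD_t^{-1})^{-1}]_{k,n}=e^{t(k-n)}\tilde R_{k,n}$ despite $D_t$ being unbounded (for instance: it holds for purely imaginary $t$, where $D_t$ is unitary, and both sides are analytic in $t$).

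The genuine gap is in the step you yourself call the core one: identifying $\tilde R_{k,n}$ with $R_{k,n}$. You argue that exponential decay plus the recurrences forces the $n$-th column of $\tilde R$, for $k\ge n$, to have the form $R_k(\g)X_n$, where $R_k(\g)=Q_k(\g)\mathfrak M-P_k(\g)$ are ``the solutions decaying at $+\infty$'' and $\mathfrak M$ is the Weyl matrix. This presupposes (i) that $Q_k(\g){\cal M}(\g,A)-P_k(\g)$ actually decays for $\g\in\Omega(A)$, and (ii) that the decaying solutions of (\ref{d1}) form exactly an $s$-block-dimensional space spanned by the block columns of $R_k(\g)$. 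But (i) is precisely Corollary \ref{cor1}, which in the paper is deduced \emph{from} Theorem \ref{thm1}; assuming it inside the proof of necessity is circular. Nor can it be outsourced to \cite{os2}: by the paper's own account, \cite{os2} establishes the orthogonality of $Q(\g)$, $Q^+(\g)$ with respect to the Weyl matrix, not a classification of decaying solutions -- producing such decay statements in the matrix-element case is exactly what the present paper sets out to prove. The paper closes this step by a purely algebraic observation that needs no solution-space theory: since the blocks $A_{k,k+r}$ and $A_{k+s,k}$ are invertible by (\ref{a}), any formal two-sided inverse $X$ of $\g I-A$ is uniquely determined by its upper-left $r\times s$ block (the row identities of $X(\g I-A)=I$ determine rows $0,\dots,r-1$ from that block by forward recursion, and the column identities of $(\g I-A)X=I$ then determine every column from its first $r$ entries); $\tilde R$ is such a formal inverse by the resolvent identities, $R$ is one by Lemma \ref{lem4}, and both have the same corner block $\mathfrak M$, hence $\tilde R=R$. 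If you replace your decaying-solutions classification by this recursion argument, your proof becomes complete, with your Combes--Thomas step then supplying the bound (\ref{claim}).
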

\begin{proof}
Necessity. Let $\g \in \Omega(A)$. Assume that $\tilde R=(\tilde
R_{i,j})_{i,j=0}^\infty $ is the matrix representation of ${\cal
R}(\g)$. Take $\mathfrak M = (\tilde
R_{i,j})_{i=0,\dots,r-1}^{j=0,\dots,s-1} $ and consider the matrix
$ R = (R_{k,n})_{k,n=0}^\infty, $ where $ R_{k,n} $ are defined by
(\ref{r1}). From the resolvent identity $(\g I -A) {\cal R}(\g)=I$
together with Lemma~4 and (\ref{init_L}) follows that $ R $ and $
\tilde R $ satisfy the same recurrence relation and the same
initializations; hence they coincide. It remains to show the decay
rate (\ref{claim}). In the scalar case ($A_{i,j} \in \mathbb{C}$)
it follows from the result of \cite{demo} on the decay rate of the
elements of the inverses of band matrices; for $A_{i,j}
\in\mathbb{C}^{N\times N} $ it can be proved in a similar manner.

Sufficiency. Assume that the conditions of the theorem are
satisfied.  As above, we build up the infinite matrix
$R=(R_{k,n})_{k,n=0}^{\infty}$. From Lemma~4 and (\ref{claim}) it
follows that we may correctly define the operator $ (A-\g I)^{-1}
$ on the basis vectors from $l^2_N$ and therefore on the finite
vectors (in this basis). Also, from (\ref{claim}) it follows that
the operator $ (A-\g I)^{-1} $ defined on the finite vectors, is
bounded. Thus we can extend the $ (A-\g I)^{-1} $ on all $l^2_N$
and therefore $ \g \in \Omega(A) $.
\end{proof}

\begin{corollary}
\label{cor1}
If $\g \in \Omega(A)$ and $A$
satisfies the conditions of Theorem 1, then
\begin{equation}
\label{l1} \limsup_{k\to\infty}
||R_k^{j}(\g)||^{\frac{1}{k}}<1,\quad \limsup_{n\to\infty}
||R_n^{i,+}(\g)||^{\frac{1}{n}}<1 ,\quad i=1,\dots,r;\;
j=1,\dots,s.
\end{equation}
\end{corollary}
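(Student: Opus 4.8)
The plan is to deduce the decay of the individual polynomial solutions $R_k^j$ and $R_n^{i,+}$ directly from the entrywise decay of the resolvent matrix $R$ supplied by Theorem~\ref{thm1}. First I would invoke Theorem~\ref{thm1}: since $\g\in\Omega(A)$, there exist a matrix $\mathfrak M$ (the Weyl matrix) and constants $C>0$, $q<1$ with $\|R_{k,n}\|\le Cq^{|n-k|}$ for all $k,n\in\Z_+$, where $R_{k,n}$ is assembled from exactly the polynomials $R_k(\g)=Q_k(\g)\mathfrak M-P_k(\g)$ and $R_n^+(\g)=\mathfrak M Q_n^+(\g)-P_n^+(\g)$ that appear in the corollary. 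Thus the required estimate is really a statement about selected entries of $R$.

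The key observation is that each column $R_k^j$ and each row $R_n^{i,+}$ coincides with a single entry of $R$, isolated by the initial conditions (\ref{init_L})--(\ref{init_R}). Using the branch $R_{k,n}=R_k(\g)Q_n^+(\g)$, valid for $0\le n<k+s$, and expanding the block product $R_kQ_n^+=\sum_{j=1}^s R_k^j Q_n^{j,+}$, I would take $n=j-1$ with $j\in\{1,\dots,s\}$; then (\ref{init_R}), i.e. $Q_{0:s-1}^+=I_s$, forces $Q_{j-1}^{j',+}=\delta_{j,j'}E$, so every term but one vanishes and $R_k^j=R_{k,j-1}$. The range condition $j-1\le s-1<k+s$ holds for every $k\ge 0$, so this branch is always the correct one. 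Symmetrically, using $R_{k,n}=Q_k(\g)R_n^+(\g)$ for $0\le k<n+r$ together with $Q_kR_n^+=\sum_{i=1}^r Q_k^i R_n^{i,+}$ and the initial condition $Q_{0:r-1}=I_r$ from (\ref{init_L}), one gets $R_n^{i,+}=R_{i-1,n}$ for $i\in\{1,\dots,r\}$.

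With these identifications the conclusion is immediate. For $k\ge j-1$ we have $\|R_k^j(\g)\|=\|R_{k,j-1}\|\le Cq^{\,k-j+1}$, hence $\|R_k^j(\g)\|^{1/k}\le C^{1/k}q^{(k-j+1)/k}\to q$, so $\limsup_{k\to\infty}\|R_k^j(\g)\|^{1/k}\le q<1$; and likewise $\|R_n^{i,+}(\g)\|=\|R_{i-1,n}\|\le Cq^{\,n-i+1}$ yields $\limsup_{n\to\infty}\|R_n^{i,+}(\g)\|^{1/n}\le q<1$.

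I do not anticipate a genuine obstacle: once the entrywise geometric decay of Theorem~\ref{thm1} is available, the entire content is the bookkeeping that picks out the single surviving term in each block product through the initial conditions. The only points deserving attention are verifying the range conditions $n<k+s$ and $k<n+r$ so that the intended case of the definition (\ref{r1}) applies, and checking that the strict bound $q<1$ is preserved under the limit superior, both of which are routine.
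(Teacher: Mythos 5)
Your proposal is correct, and it is exactly the intended argument: the paper states this corollary without any written proof, treating it as an immediate consequence of Theorem~\ref{thm1}. Your identifications $R_k^j(\g)=R_{k,j-1}$ and $R_n^{i,+}(\g)=R_{i-1,n}$ (forced by the initial conditions (\ref{init_L})--(\ref{init_R}) and the branch conditions in (\ref{r1})), combined with the geometric bound (\ref{claim}), supply precisely the routine deduction the author omitted.
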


\begin{theorem}
If $\g \in \Omega(A)$ and  A satisfies the conditions of Theorem
1, then for $i=1,\dots,r$, and $j=1,\dots,s$ there holds
\begin{equation}
\label{l2} \limsup_{k\to\infty}
||Q_k^{i}(\g)||^{\frac{1}{k}}>1,\quad \limsup_{k\to\infty}
||Q_k^{j,+}(\g)||^{\frac{1}{k}}>1 .
\end{equation}
\end{theorem}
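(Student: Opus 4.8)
The plan is to exploit the bilinear ``Wronskian'' form of Lemma~\ref{lem1}, which for any pair of solutions $Y$ of (\ref{d1}) and $Y^+$ of (\ref{d2}) produces a quantity $F(k)$ independent of $k$; write $W(Y^+,Y)$ for this common value. First I would read off, from the four blocks of the matrix identity (\ref{mi}) in Lemma~\ref{lem2} (and checking signs against the scalar computation (\ref{qq})), the biorthogonality relations
\begin{equation*}
W(P^{\ell,+},Q^m)=\delta_{\ell m}E,\quad W(Q^{j,+},Q^m)=O,\quad W(P^{\ell,+},P^m)=O,\quad W(Q^{j,+},P^m)=-\delta_{jm}E
\end{equation*}
for the appropriate index ranges. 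Since $W$ is linear in its first argument (coefficients acting on the left) and in its second (coefficients on the right), substituting $R^{i,+}=\sum_{m=1}^s \mathfrak M_{i,m}Q^{m,+}-P^{i,+}$ and $R^m=\sum_{i=1}^r Q^i\mathfrak M_{i,m}-P^m$ collapses the $\mathfrak M$-terms (because $W(Q^{m,+},Q^i)=O$) and yields the key \emph{nonvanishing} pairings
\begin{equation*}
W(R^{i,+},Q^i)=-E,\qquad W(Q^{j,+},R^j)=E,
\end{equation*}
for $i=1,\dots,r$ and $j=1,\dots,s$.

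Fixing $i$, I would apply Lemma~\ref{lem1} with $Y^+=R^{i,+}$ and $Y=Q^i$, so that $F(k)=W(R^{i,+},Q^i)=-E$ for all $k$. In the explicit formula for $F(k)$ there are only finitely many terms (a number depending on $r,s$), and every factor $R^{i,+}$ occurs at an index in $[k-r,k+s-1]$ while every factor $Q^i$ occurs at an index in $[k-s,k+r-1]$. Bounding the entries of $A$ by (\ref{bound}) and using Corollary~\ref{cor1} to get $\|R^{i,+}_n\|\le C'q^{\,n}$ for some $q<1$ and all $n$, I take norms and obtain, with $c_0=\|E\|>0$ and a constant $D$ independent of $k$,
\begin{equation*}
c_0=\|F(k)\|\le D\,q^{\,k-r}\max_{|n-k|\le r+s}\|Q^i_n\|,
\end{equation*}
whence $\max_{|n-k|\le r+s}\|Q^i_n\|\ge c\,q^{-k}$ for some $c>0$.

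To finish, for each $k$ pick $n_k$ with $|n_k-k|\le r+s$ attaining this maximum; then $\|Q^i_{n_k}\|\ge c\,q^{-k}\ge c\,q^{\,r+s}q^{-n_k}$, so $\|Q^i_{n_k}\|^{1/n_k}\to q^{-1}>1$ as $k\to\infty$, giving $\limsup_{k}\|Q^i_k\|^{1/k}\ge q^{-1}>1$. The companion estimate for $Q^{j,+}$ follows by the mirror-image argument: use the pairing $W(Q^{j,+},R^j)=E$ together with the decay $\|R^j_n\|\le C'q^{\,n}$ furnished by the first half of Corollary~\ref{cor1}.

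I expect the only delicate point to be the index bookkeeping of the middle step: one must verify that in $F(k)$ the decaying factor and the target factor are both evaluated inside a bounded window about $k$, so that the geometric decay of $R^{i,+}$ (resp.\ $R^j$) contributes exactly a factor $q^{\,k}$ up to constants, and the constancy and nonvanishing of $W$ then force the polynomial solution to grow at least like $q^{-k}$ somewhere in that window. Verifying the exact signs and block positions of the biorthogonality relations drawn from (\ref{mi}) is the other place where care is needed, but this is routine given Lemma~\ref{lem2}.
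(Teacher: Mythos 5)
Your proof is correct and is essentially the paper's own argument in different packaging: the pairings $W(Q^{j,+},R^j)=E$ and $W(R^{i,+},Q^i)=-E$ are exactly what the paper obtains (its identities (\ref{lb}) and (\ref{idsup})) by multiplying the identity of Lemma \ref{lem2} on the left and right by block-triangular matrices built from $\mathfrak M$, which is just the matrix form of your bilinearity computation with the Lemma \ref{lem1} invariant. The only remaining differences are cosmetic: you finish with a direct lower bound giving $\limsup_{k}\|Q_k^{i}\|^{1/k}\ge 1/q>1$, while the paper runs the same decay-versus-constancy estimate as a proof by contradiction starting from the assumption $\limsup_{k}\|Q_k^{1,+}\|^{1/k}\le 1$.
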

\begin{proof}
Multiplying the equation of Lemma 2 on the left and  on the right
with
$$
\left[\begin{array}{cc}
   I_r & -\mathfrak M  \\ {\bf 0}_{s \times r} & I_s
   \end{array}\right]
, \quad \left[\begin{array}{cc}
   I_r & \mathfrak M  \\ {\bf 0}_ {s \times r} & I_s
   \end{array}\right]
$$
gives
  \begin{eqnarray}
  \label{lb}
   && I_{r+s} = \left[\begin{array}{c}
   -R^{+}_{k-r:k+s-1}  \\ Q^{+}_{k-r:k+s-1}
   \end{array}\right]
   \\&&\cdot
   \left[\begin{array}{cc}
      {\bf 0}_{r \times s} &
      - A_{k-r:k-1,k:k+r-1} \\
      A_{k:k+s-1,k-s:k-1} & {\bf 0}_{s \times r}
   \end{array}\right]
   \cdot
   \left[\begin{array}{cc}
   Q_{k-s:k+r-1}  R_{k-s:k+r-1}
   \end{array}\right]\nonumber
\end{eqnarray}
and hence in particular
$$
   Q^+_{k-r:k+s-1}
   \cdot
   \left[\begin{array}{cc}
      {\bf 0}_{r \times s} &
      - A_{k-r:k-1,k:k+r-1} \\
      A_{k:k+s-1,k-s:k-1} & {\bf 0}_{s \times r}
   \end{array}\right]
   \cdot R_{k-s:k+r-1} = I_s .
$$
Therefore
\begin{equation}
\label{idsup} \sum_{m=0}^{s-1}\sum_{n=1}^s
Q_{k+m}^{j,+}A_{k+m,k+m-n}R_{k+m-n}^{j}-
\sum_{m=0}^{r-1}\sum_{n=1}^r
Q_{k+m-n}^{j,+}A_{k+m-n,k+n}R_{k+n}^{j}=E,\; j=1,\dots,s.
\end{equation}
Take $j=1$. Now assume that $\limsup_{k\to\infty}
||Q_k^{1,+}(\g)||^{\frac{1}{k}}\le 1 $. Then for some  $C_1>0$ and
$d, \; q<d<1$ we have $||Q_k^{1,+}(\g)||^{\frac{1}{k}}\le C_1
(1/d)^{k} $. It means that the norm of the left-hand side of
(\ref{idsup}) can be majorated by $ C_2 q^{k-r}(1/d)^{k-r} $ for
some $C_2 > 0 $, which tends to zero as $ k \to \infty $.
Obviously, this contradicts the identity (\ref{idsup}) and
therefore we have proved (\ref{l2}) for $j=1$. By taking
$j=2,\dots, s$ and applying the above arguments we obtain
(\ref{idsup}) for another values of $j$.

Also, from (\ref{lb}) follows
$$
   -R^+_{k-r:k+s-1}
   \cdot
   \left[\begin{array}{cc}
      {\bf 0}_{r \times s} &
      - A_{k-r:k-1,k:k+r-1} \\
      A_{k:k+s-1,k-s:k-1} & {\bf 0}_{s \times r}
   \end{array}\right]
   \cdot Q_{k-s:k+r-1} = I_r .
$$
From this identity we get (\ref{l2}) for $Q_k(\g)$ similarly as we
have done it for $Q_k^{+}(\g)$.

\end{proof}

Finally note that in the scalar case ($ A_{i,j} \in \mathbb{C} $ )
the above results on the operators $A$ were obtained in
(\cite{b2}) for possibly unbounded operators with $(a_k)_{k\geq0}$
defined by $$
 a_k:= \max\{ ||A_{k-r:k-1,k:k+r-1}||,
                       ||A_{k:k+s-1,k-s:k-1} ||\},\quad
                       k\ge 0,
$$ containing a sufficiently dense bounded subsequence. If,
instead we consider the matrix case ($ A_{i,j} \in {\mathbb
C}^{N\times N} $ ), we can obtain the same results as above, we
omit the details.

\end{document}